 \newtheorem{theorem}{Theorem}[section]
 \newtheorem{corollary}[theorem]{Corollary}
 \newtheorem{lemma}[theorem]{Lemma}
 \theoremstyle{definition}
 \newtheorem{remark}[theorem]{Remark}
 \theoremstyle{remark}
 \newtheorem*{ack}{Acknowledgements}
 \numberwithin{equation}{section}
\newcommand\R{\mathbb{R}}
\newcommand\E{\mathbb{E}}
\renewcommand\P{\mathbb{P}}
\newcommand\D{{\mathcal D}}
\renewcommand{\leq}{\leqslant}
\renewcommand{\geq}{\geqslant}
\renewcommand{\le}{\leqslant}
\renewcommand{\ge}{\geqslant}
\renewcommand{\Re}{\operatorname{Re}}
\newcommand{\sgn}{\operatorname{sgn}}
\newcommand{\Var}{\operatorname{Var}}
\newcommand\eps{{\varepsilon}}
\begin{document}

%
%
%
%
%
%
%
%
%

\title[An Inequality Related to Negative Definite Functions]{A Probabilistic Inequality Related to\\ Negative Definite Functions}

\author[M.~Lifshits]{Mikhail Lifshits}

\address{%
St.\ Petersburg State University,\br
Department of Mathematics and Mechanics,\br
198504 Stary Peterhof,\br
Bibliotechnaya pl.~2,\br
Russia}
\email{lifts@mail.rcom.ru}

\author[R.L.~Schilling]{Ren\'e L.\ Schilling}
\address{Institute of Mathematical Stochastics,\br
TU Dresden,\br
D-01062 Dresden,\br
Germany}
\email{rene.schilling@tu-dresden.de}

\author[I.~Tyurin]{Ilya Tyurin}
\address{Moscow State University,\br
Department of Mechanics and Mathematics,\br
Leninskie gory~1,\br
119991 Moscow,\br
Russia}
\email{itiurin@gmail.com}

\subjclass{Primary 60E15; Secondary 60G22, 60E10}

\keywords{Bifractional Brownian motion, moment inequalities, Bernstein functions,
negative definite functions.}

\date{}

\begin{abstract}
    We prove that for any pair of i.i.d.\ random vectors  $X, Y$ in $\R^n$ and
    any real-valued continuous negative definite function $\psi:\R^n\to \R$ the inequality
    $$
        \E\, \psi(X-Y) \leq \E\, \psi(X+Y).
    $$
    holds. In particular, for $\alpha \in (0,2]$ and the Euclidean norm $\|\cdot\|_2$ one has
    $$
        \E \|X-Y\|_2^\alpha \leq \E \|X+Y\|_2^\alpha.
    $$
The latter inequality is due to A.\ Buja et al.\ \cite{BLRS1994} where it is used for some
applications in multivariate statistics. We show a surprising connection with bifractional
Brownian motion and provide some related counter-examples.
\end{abstract}

\maketitle

\section{Introduction}

Let $X,Y$ be i.i.d.\ random variables with finite expectations. Then one has
\begin{equation} \label{e1}
  \E |X-Y| \leq \E |X+Y|.
\end{equation}
The inequality \eqref{e1} appeared recently in an analytic context (properties of
integrable functions) \cite{XXX}. Since \eqref{e1} is a nice fact in itself and
since it seems not to be well known in the probabilistic community, it is desirable
to search for adequate proofs and to explore possible extensions of it. For instance,
for which values of $\alpha$ do we have
\begin{equation} \label{e2}
  \E |X-Y|^\alpha \leq \E |X+Y|^\alpha\, ?
\end{equation}
As before, we assume that $X$ and $Y$ are i.i.d.\ and $\E |X|^\alpha<\infty$.

Proving  \eqref{e1} is a non-trivial exercise for a probability course. If $X,Y$
are real-valued, one way to see this inequality is to use the identity
\[
   \E |X+Y| - \E |X-Y| =2 \int_0^\infty \left[\P(X>r)-\P(X<-r) \right]^2 dr.
\]
For \eqref{e2} we are, however, not aware of a similar elementary approach.
On the other hand, A.\ Buja et al.\ prove in \cite{BLRS1994} even a multivariate version of
\eqref{e2}: for any pair of i.i.d.\ random vectors  $X, Y$ in $\R^n$, any
$\alpha \in (0,2]$ and for a class of norms $\|\cdot\|$ on $\R^n$ including the
Euclidean norm $\|\cdot\|_2$ the estimate
\begin{equation} \label{e2v}
     \E \|X-Y\|^\alpha \leq \E \|X+Y\|^\alpha
\end{equation}
holds true. The elegance of this inequality is obvious; at the same time we stress
that it arises from statistical applications. In any case it merits to be better
known in the probabilistic community!

In Section~\ref{s:rene} we give an extension of \eqref{e2v} by replacing the norm
with an arbitrary negative definite function. Moreover, we show how this fact
extends to an arbitrary number of i.i.d.\ random vectors.
In Sections \ref{s:gaussian} and  \ref{s:bifrac} we establish a surprising connection to some recent
advances in the theory of random processes related to  bifractional Brownian motion.
A counterexample to \eqref{e2} with $\alpha\in (2,\infty)$ is given in
Section~\ref{s:counter}.

\section{Main result}\label{s:rene}

Consider the class of \emph{continuous real-valued negative definite functions}, i.e.\
characteristic exponents of symmetric L\'evy processes. The notion of negative definite
function goes back to Schoenberg; good sources are the books \cite{BF1975} and
\cite{SSV2011}. Recall that a continuous real-valued negative definite function is
uniquely given by its L\'evy-Khintchine representation
\begin{equation}\label{LK_repr}
   \psi(\xi)=a+\frac12\,\langle Q\xi,\xi\rangle +\int_{\R^n\setminus\{0\}}
   \left(1-\cos\langle \xi,u\rangle\right)\nu(du),
   \qquad \xi\in \R^n,
\end{equation}
where $a\geq 0$ is a constant, $Q\in\R^{n\times n}$ is a symmetric positive semidefinite
matrix and $\nu$ is the L\'evy measure, i.e.\ a measure on $\R^n\setminus\{0\}$ satisfying
the integrability condition
\begin{equation}\label{LK_mes}
    \int_{\R^n\setminus\{0\}}  \min\{\|u\|^2_2,1\} \nu(du)<\infty.
\end{equation}
Without loss of generality, we will always assume that $a=0$, i.e.\ $\psi(0)=0$. 
For our discussion it is worth noticing that $(\xi,\eta)\mapsto \sqrt{\psi(\xi-\eta)}$ is
always a metric. A deep theorem of Schoenberg states that a metric space $(\R^n ,d)$ can
be isometrically embedded into an (in general infinite-dimensional) Hilbert space
$\mathcal H$ if, and only if, $d(\xi,\eta)$ is of the form $d_\psi(\xi,\eta)=\sqrt{\psi(\xi-\eta)}$,
cf.\ \cite{schoenberg38}, \cite[p.\ 187]{BL2000} as well as \cite{jac-et-al} for a discussion
of metric measure spaces related to the metric $d_\psi$.

An important subclass of continuous negative definite functions are the spherically symmetric
negative definite functions. These are of the form
\begin{equation}\label{bernstein}
   \xi\mapsto f(\|\xi\|_2^2) \qquad \text{where $f$ is a Bernstein function}.
\end{equation}

Recall that a \emph{Bernstein function} is a function $f:\R_+\to\R_+$ which admits the
following L\'evy-Khintchine representation
\[
     f(\lambda)= a+b\lambda+\int_0^\infty \left(1-e^{-t\lambda} \right) \mu(dt);
\]
here $a,b\ge 0$ are constants and $\mu$ is a measure on $(0,\infty)$ satisfying the
integrability condition $\int_0^\infty \min\{t,1\}\,\mu(dt)<\infty$. In probability theory
Bernstein functions arise as the characteristic exponents of the Laplace transform of
subordinators, i.e.\ increasing one-dimensional L\'evy processes. Bernstein functions,
many examples and their connections to various fields of mathematics are discussed in the
monograph \cite{SSV2011}. It is easy to see that Bernstein functions are infinitely many times
differentiable, increasing, concave; moreover, they grow at most linearly. Typical examples
are $\lambda\mapsto\log(1+\lambda)$ and $\lambda\mapsto f_\beta(\lambda):=\lambda^\beta$ for
$0<\beta\leq 1$. Note that the composition $f\circ\psi$ of a Bernstein function $f$ with a
continuous real-valued negative definite function $\psi$ is again a continuous real-valued
negative definite function. At the level of stochastic processes this corresponds to
\emph{Bochner's subordination} of the L\'evy process with characteristic exponent $\psi$ by
the subordinator with the Laplace exponent $f$.

Using the Bernstein functions $f_\beta$ with $\beta = \alpha/2$ and $0<\alpha\leq 2$ we
obtain
\begin{align*}
    \xi \mapsto \|\xi\|_2^\alpha = f_{\alpha/2}(\|\xi\|^2),
    \qquad
    0< \alpha\le 2,\\
    \xi \mapsto d_\psi(\xi,0)^\alpha = \sqrt{\psi(\xi)}^{\,\alpha} = f_{\alpha/2}(\psi(\xi)),
    \qquad
    0< \alpha\le 2,
\end{align*}
as examples for real-valued continuous negative definite functions. Note that the functions
defined by \eqref{bernstein} are characteristic exponents of subordinate Brownian motions.

We prove the following result extending \eqref{e2v}.

\begin{theorem}\label{t:main}
Let $\psi$ be a real-valued
continuous negative definite function on $\R^n$. For any pair of i.i.d.\ random
vectors $X,Y$ in $\R^n$ it is true that
\begin{equation} \label{eq:psi}
     \E\, \psi(X-Y) \leq \E\, \psi(X+Y).
\end{equation}
\end{theorem}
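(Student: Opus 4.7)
The plan is to reduce the general statement to one elementary characteristic-function identity and then pass to the limit. Let $\phi(u) := \E e^{i\langle u, X\rangle}$. Since $X, Y$ are i.i.d.\ and $\phi(-u) = \overline{\phi(u)}$, for every $u \in \R^n$ one has
$$\E(1-\cos\langle u, X+Y\rangle) - \E(1-\cos\langle u, X-Y\rangle) = |\phi(u)|^2 - \Re\phi(u)^2 = 2(\Im\phi(u))^2 \geq 0.$$
This ``base estimate'' handles the L\'evy jump part of \eqref{LK_repr} at the level of a single $u$; the Gaussian term and the integration against $\nu$ will be absorbed by a single approximation argument below.

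Next, I would use that $e^{-t\psi}$ is itself a characteristic function for every $t \ge 0$, a direct consequence of $\psi$ being continuous and negative definite, and let $\mu_t$ denote the corresponding (symmetric) probability measure. Symmetry of $\mu_t$ gives
$$\frac{1 - e^{-t\psi(\xi)}}{t} = \frac{1}{t}\int_{\R^n}(1 - \cos\langle \xi, u\rangle)\,\mu_t(du).$$
Integrating the base estimate against $\mu_t/t$ is legitimate by Tonelli (all integrands are non-negative) and yields
$$\E\,\frac{1 - e^{-t\psi(X+Y)}}{t} \geq \E\,\frac{1 - e^{-t\psi(X-Y)}}{t} \qquad\text{for every } t > 0.$$

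Finally, I would pass to the limit $t \downarrow 0$. A short calculus check shows that $s \mapsto (1-e^{-s})/s$ is decreasing on $(0,\infty)$ with limit $1$ at $0$, so for each fixed $\xi$ the quantity $(1-e^{-t\psi(\xi)})/t$ increases monotonically to $\psi(\xi)$ as $t \downarrow 0$. Monotone convergence then promotes the displayed inequality to $\E\psi(X+Y) \geq \E\psi(X-Y)$, which is the claim. An attractive feature of this route is that all expectations are non-negative (since $\psi \ge 0$ from \eqref{LK_repr} with $a = 0$), so every quantity lives in $[0,\infty]$ and no moment assumptions on $X$ are needed. The main subtlety, in my view, is the recognition that the Gaussian part of $\psi$ does not require separate treatment: because $\psi_t := (1-e^{-t\psi})/t$ is again continuous negative definite, the sign information $2(\Im\phi(u))^2 \ge 0$ from the base estimate is enough, irrespective of whether $\psi$ carries a diffusive component.
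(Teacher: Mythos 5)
Your proof is correct. The pointwise estimate at its core --- $\E(1-\cos\langle u,X+Y\rangle)-\E(1-\cos\langle u,X-Y\rangle)=|\phi(u)|^2-\Re\phi(u)^2=2(\Im\phi(u))^2\ge 0$, i.e.\ $\Re(z^2)\le|z|^2$ applied to $z=\phi(u)$ --- is exactly the inequality on which the paper's own proof rests, but you deploy it through a different decomposition. The paper inserts the L\'evy--Khintchine representation \eqref{LK_repr} of $\psi$ itself, applies Tonelli to the integral against the L\'evy measure $\nu$, and disposes of the constant and Gaussian parts ($a$ and $Q$) separately as ``elementary'' cases. You instead use the Schoenberg correspondence to write $\psi$ as the increasing limit of $\psi_t=(1-e^{-t\psi})/t$, each of which is an integral of $1-\cos\langle\cdot,u\rangle$ against the \emph{finite} symmetric measure $\mu_t/t$; the base estimate gives $\E\,\psi_t(X+Y)\ge\E\,\psi_t(X-Y)$ for every $t>0$, and monotone convergence finishes the job. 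What this buys: you never need the L\'evy--Khintchine formula (only Bochner's theorem and positive definiteness of $e^{-t\psi}$), the Gaussian and constant components require no separate treatment, the measures $\mu_t/t$ are finite so the interchange of integrals is painless, and everything lives in $[0,\infty]$ so no moment hypotheses are needed --- a point the paper leaves implicit. Two details worth making explicit in a final write-up: the measure $\mu_t$ is symmetric because a real-valued negative definite function is automatically even ($\psi(-\xi)=\overline{\psi(\xi)}=\psi(\xi)$), which justifies replacing $e^{i\langle\xi,u\rangle}$ by $\cos\langle\xi,u\rangle$ under $\mu_t$; and before subtracting the two $t$-level expectations you should note that each is bounded by $2/t$, so the difference is well defined and equals $\tfrac{2}{t}\int(\Im\phi(u))^2\,\mu_t(du)\ge0$.
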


\begin{proof}
Without loss of generality we may assume that $a=0$ and $Q=0$ -- in both cases the inequality
\eqref{eq:psi} is elementary.

Using the L\'evy-Khintchine
representation of $\psi$ we get
\begin{align*}
   \E\, \psi(X+Y)
   &= \E \int_{\R^n\setminus\{0\}}\big(1-\cos\langle X+Y,u\rangle\big)\,\nu(du)
\\
   &= \E \int_{\R^n\setminus\{0\}}\big(1-\Re\exp(i\langle X+Y,u\rangle)\big)\,\nu(du)
\\
   &= \int_{\R^n\setminus\{0\}}\big(1-\Re\, \E\exp(i\langle X+Y,u\rangle)\big)\,\nu(du)
\\
   &= \int_{\R^n\setminus\{0\}}\left(1-\Re\left[ \E\exp(i\langle X,u\rangle)\right]^2\right)\nu(du).
\end{align*}
A similar calculation yields
\begin{align*}
   \E\, \psi(X-Y)
   &= \E \int_{\R^n\setminus\{0\}}\big(1-\cos\langle X-Y,u\rangle\big)\,\nu(du)
\\
   &= \E \int_{\R^n\setminus\{0\}}\big(1-\Re\exp(i\langle X-Y,u\rangle)\big)\,\nu(du)
\\
   &= \int_{\R^n\setminus\{0\}}\big(1-\Re\, \E\exp(i\langle X-Y,u\rangle)\big)\,\nu(du)
\\
   &= \int_{\R^n\setminus\{0\}} \left(1- \left| \E\exp(i\langle X,u\rangle)\right|^2 \right)\nu(du).
\end{align*}

Using the elementary estimate $\Re(z^2)\le |z^2|= |z|^2$ we obtain \eqref{eq:psi}.
\end{proof}

\begin{remark}
Let $X_1,\dots X_{2m}$ be
i.i.d.\ random variables in $\R^n$ and $\eps_j=\pm 1$ (non-random, or even
random but independent of the $X_1, \dots, X_{2m}$) constants satisfying
$\sum_{j=1}^{2m}\eps_j=0$. Then
\begin{equation} \label{evpsim}
  \E\, \psi\left(\sum_{j=1}^{2m} \eps_jX_j\right) \leq
  \E\, \psi\left(\sum_{j=1}^{2m}
  X_j\right) .
\end{equation}
This follows if we use Theorem~\ref{t:main} for $X = \sum_{j=1}^{2m} \eps_j^+ X_j$ and
$Y = \sum_{j=1}^{2m} \eps_j^- X_j$.
\end{remark}

Using the distance function $d_\psi(\xi,\eta):= \sqrt{\psi(\xi-\eta)}$
related to a real-valued continuous negative definite function $\psi$ we get the
following counterpart of \eqref{e2v}.
\begin{corollary}\label{c:main}
Let $\psi:\R^n\to\R$ be a real-valued
continuous negative definite function,
$d_\psi(\xi,\eta)= \sqrt{\psi(\xi-\eta)}$ the associated metric and
$0<\alpha\leq 2$. For any pair of i.i.d.\ random vectors $X,Y$ in $\R^n$ it is true that
\begin{equation} \label{eq:distance}
  \E\, d_\psi^\alpha(X-Y) \leq \E\, d_\psi^\alpha(X+Y).
\end{equation}
\end{corollary}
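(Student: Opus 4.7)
The plan is to deduce Corollary~\ref{c:main} as a direct consequence of Theorem~\ref{t:main} applied to a modified characteristic exponent. The key ingredient, already highlighted in the discussion preceding the theorem, is that the composition $f\circ\psi$ of a Bernstein function $f$ with a continuous real-valued negative definite function $\psi$ is itself a continuous real-valued negative definite function; this is Bochner subordination at the level of characteristic exponents.

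For $\alpha\in(0,2]$ one has $\alpha/2\in(0,1]$, so $f_{\alpha/2}(\lambda)=\lambda^{\alpha/2}$ is a Bernstein function, as recalled in the introduction. Therefore the function
\[
   \tilde\psi(\xi):=\psi(\xi)^{\alpha/2}=f_{\alpha/2}(\psi(\xi))
\]
is a continuous real-valued negative definite function on $\R^n$. Applying Theorem~\ref{t:main} to $\tilde\psi$ and then unwinding the notation $d_\psi(\xi,\eta)=\sqrt{\psi(\xi-\eta)}$ yields
\[
   \E\, d_\psi^\alpha(X-Y)=\E\,\tilde\psi(X-Y)\leq \E\,\tilde\psi(X+Y)=\E\, d_\psi^\alpha(X+Y),
\]
which is precisely \eqref{eq:distance}.

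There is no genuine obstacle in this argument: the content of the corollary is already packaged inside Theorem~\ref{t:main}, once one combines it with the stability of negative definite functions under composition with Bernstein functions. The only ancillary fact to invoke is that $\lambda\mapsto\lambda^{\alpha/2}$ is a Bernstein function for $\alpha\in(0,2]$, which is immediate from its L\'evy--Khintchine representation (or from the standard identity $\lambda^\beta=\frac{\beta}{\Gamma(1-\beta)}\int_0^\infty(1-e^{-t\lambda})t^{-\beta-1}\,dt$ for $\beta\in(0,1)$, together with the boundary cases $\beta=0,1$).
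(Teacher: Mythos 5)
Your proof is correct and is exactly the route the paper intends: the corollary follows from Theorem~\ref{t:main} applied to $f_{\alpha/2}\circ\psi$, using the fact (stated just before the theorem) that composing a continuous real-valued negative definite function with the Bernstein function $\lambda\mapsto\lambda^{\alpha/2}$ again yields a continuous real-valued negative definite function. Nothing is missing.
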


\begin{remark}\label{r:converse-conjecture}
Assume that $\psi:\R^n \to \R$ is a continuous function such that $\psi(0)=0$ and $\psi(\xi)=\psi(-\xi)$. 
If \eqref{eq:psi} holds for this $\psi$ and \emph{any} random variable $X$ (and an independent 
copy $Y$ of $X$), then one can show that the kernel $K_\psi(\xi,\eta):=\psi(\xi+\eta)-\psi(\xi-\eta)$ 
is positive definite. We 
wonder whether this already entails that $\psi$ is a continuous negative definite function.
\end{remark}

\section{A relation to random processes} \label{s:gaussian}

We will show now that
the inequality \eqref{eq:psi} has an interesting relation to Gaussian processes.
Let $\psi:\R^n\to\R$ be a real-valued continuous negative definite function defined on $\R^n$.

\begin{lemma}\label{l:pos-def}
    The kernel $K^\psi(\xi,\eta)=\psi(\xi+\eta)-\psi(\xi-\eta)$ is positive definite.
\end{lemma}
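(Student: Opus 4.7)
The plan is to apply the Lévy-Khintchine representation \eqref{LK_repr} to $\psi$ and decompose $K^\psi$ into the contributions of its three components (constant, Gaussian, jump), verifying positive definiteness of each piece separately. Since the cone of positive definite kernels is closed under addition and under integration against a non-negative measure, this suffices.

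The constant $a$ cancels out in the difference $\psi(\xi+\eta)-\psi(\xi-\eta)$, so it contributes nothing. For the Gaussian part, polarisation of the quadratic form $q(\xi)=\langle Q\xi,\xi\rangle$ yields
\[
   \tfrac12 q(\xi+\eta)-\tfrac12 q(\xi-\eta)=2\langle Q\xi,\eta\rangle.
\]
Writing $Q=A^{\top}A$ (possible since $Q$ is symmetric positive semidefinite) this becomes $2\langle A\xi,A\eta\rangle$, a manifestly positive definite kernel.

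For the jump part, I would invoke the product-to-sum identity $\cos(a-b)-\cos(a+b)=2\sin a\sin b$ with $a=\langle\xi,u\rangle$ and $b=\langle\eta,u\rangle$, turning the integrand $\cos\langle\xi-\eta,u\rangle-\cos\langle\xi+\eta,u\rangle$ into $2\sin\langle\xi,u\rangle\sin\langle\eta,u\rangle$. For any points $\xi_1,\ldots,\xi_N\in\R^n$ and complex scalars $c_1,\ldots,c_N$, the associated quadratic form is
\[
   2\int_{\R^n\setminus\{0\}}\Bigl|\sum_{j=1}^N c_j\sin\langle\xi_j,u\rangle\Bigr|^{\,2}\nu(du)\geq 0,
\]
using that the integrand is a real function of $u$, so its outer product against $c_j\overline{c_k}$ collapses to a squared modulus.

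I do not foresee any serious obstacle; the argument is a direct computation from \eqref{LK_repr}. The only point requiring a moment of thought is choosing the right elementary identities — polarisation of $\langle Q\cdot,\cdot\rangle$ for the Gaussian piece, and the $\cos(a-b)-\cos(a+b)=2\sin a\sin b$ identity for the jump piece — which together expose the kernel as a sum (and integral) of explicit outer products.
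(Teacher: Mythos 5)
Your proposal is correct and follows essentially the same route as the paper: the constant drops out, the Gaussian part polarises to $2\langle Q\xi,\eta\rangle$ (the paper verifies its positive definiteness via $\langle Q\sum_\xi\lambda_\xi\xi,\sum_\xi\lambda_\xi\xi\rangle\ge 0$ rather than factoring $Q=A^{\top}A$, a cosmetic difference), and the jump part is handled by exactly the same identity $\cos\langle\xi-\eta,u\rangle-\cos\langle\xi+\eta,u\rangle=2\sin\langle\xi,u\rangle\sin\langle\eta,u\rangle$ followed by recognising the quadratic form as $2\int\bigl|\sum_j c_j\sin\langle\xi_j,u\rangle\bigr|^2\nu(du)\ge 0$.
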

\begin{proof}
By the L\'evy-Khintchine formula \eqref{LK_repr} we get
$$
        K^\psi(\xi,\eta)
        = 2 \langle Q\xi,\eta\rangle  + \int_{\R^n\setminus \{0\}}
          \left(\cos( \langle\xi-\eta,u\rangle) -  \cos( \langle\xi+\eta,u\rangle)  \right)\,\nu(du).
$$
Using the elementary trigonometric identity
\[
        \cos\langle\xi-\eta,u\rangle -  \cos\langle\xi+\eta,u\rangle
        = 2\sin \langle\xi,u\rangle \sin \langle\eta,u\rangle,
\]
we see that
\[
    K^\psi(\xi,\eta)
        = 2 \langle Q\xi,\eta\rangle  + 2 \int_{\R^n\setminus \{0\}}
             \sin \langle\xi,u\rangle \sin \langle\eta,u\rangle  \,\nu(du).
\]
Now let $S$ be a finite set and $(\lambda_\xi,\: \xi\in S)$ be complex numbers. Then
\begin{align*}
    &\sum_{\xi,\eta\in S} K^\psi(\xi,\eta)\lambda_\xi\overline{\lambda}_\eta\\
        &\quad= 2 \sum_{\xi,\eta\in S} \lambda_\xi \overline{\lambda_\eta} \langle Q\xi,\eta\rangle
           + 2\int_{\R^n\setminus \{0\}}
            \left( \sum_{\xi,\eta\in S} \lambda_\xi \sin \langle\xi,u\rangle
            \:\overline{\lambda_\eta \sin \langle\eta,u\rangle} \right)\nu(du)\\
        &\quad= 2\,\left\langle Q \sum_{\xi\in S} \lambda_\xi\, \xi,    \sum_{\xi\in S} \lambda_\xi\, \xi
          \right\rangle
          + 2\int_{\R^n\setminus \{0\}}
             \left| \sum_{\xi\in S} \lambda_\xi\sin \langle\xi,u\rangle  \right|^2\nu(du)\\
        &\quad\ge 0,
    \end{align*}
which means that $K^\psi(\cdot,\cdot)$ is positive definite.
\end{proof}

\begin{remark}
  A special case of Lemma \ref{l:pos-def} for powers of $\ell_p$-norms is proved in
  \cite{BLRS1994}.
\end{remark}

\begin{proof}[Probabilistic proof of Theorem~\ref{t:main}]
Since $K^\psi(\xi,\eta)$ is positive definite, there is a
centred Gaussian process
$\big(G^\psi_\xi,\; \xi\in\R^n\big)$ whose covariance function is $K^\psi(\xi,\eta)$.

For given i.i.d.\ random vectors $X,Y\in \R^n$ set
$$
    Z^\psi := \int_{\R^n} G^\psi_{\xi} \, P(d\xi),
$$
where $P$ stands for the common distribution of $X$ and $Y$. Then
\begin{align*}
  0
  \leq \Var(Z^\psi)
  &=
  \int_{\R^n} \int_{\R^n}  K^\psi(\xi,\eta) \, P(d\xi)\, P(d\eta)
\\
  &=\int_{\R^n} \int_{\R^n}  \left(\psi(\xi+\eta) - \psi(\xi-\eta)\right)\, P(d\xi)\, P(d\eta)
\\
  &= \E\, \psi(X+Y) - \E\, \psi(X-Y),
\end{align*}
and we obtain again
$
  \E\, \psi(X-Y) \le \E\, \psi(X+Y).
$
\end{proof}

\section{Relation to bifractional Brownian motion} \label{s:bifrac}

In some most important cases it is possible to identify the Gaussian process
$\big(G^\psi_\xi, \; \xi\in\R^n\big)$ of Section~\ref{s:gaussian} with
 \emph{bifractional Brownian motion} (bBm). The latter
process was introduced by Houdr\'e and Villa in \cite{HV2003} as a centred Gaussian process
$B^{H,K} = \big(B^{H,K}_t,\; t\in \R^n\big)$ with covariance function
$$
    R^{H,K}(t,s)
    := \E\left(B^{H,K}_t B^{H,K}_s\right)
    = 2^{-K}\left((||t||_2^{2H}+||s||_2^{2H})^K - ||t-s||_2^{2HK}\right),
$$
where $s,t\in \R^n$.
For  $n=1$, $K=1$ we get the usual fractional Brownian motion $B^H$ with Hurst index $H$. Originally,
the process was defined for the parameters $H\in(0,1]$ and ${K\in(0,1]}$. Bardina and Es-Sebaiy
\cite{BE2010} recently proved that  $B^{H,K}$ exists for all $(H,K)\in \D$, where
\[
    \D:= \{H,K: 0<H\le 1, 0<K\le 2, H\cdot K \le 1\}.
\]
(The possibility of such an extension was already indicated in the earlier work
by Lei and Nualart \cite{LN2009} who established an integral representation
relating $B^{H,K}$ with fractional Brownian motion $B^{HK}$).

For $\psi(\xi) := |\xi|^\alpha$, $0<\alpha\le 2$, and
\[
   G^\psi_\xi := 2^{\alpha/2}\,  \sgn(\xi)\, B^{\frac 12, \alpha}_{|\xi|}, \qquad \xi\in\R,
\]
it is trivial to see that
\[
   \E \left(G^\psi_\xi G^\psi_\eta\right) =  \sgn(\xi\eta) \, 2^{\alpha}\,
   \E\left( B^{\frac 12, \alpha}_{|\xi|}, B^{\frac 12, \alpha}_{|\xi|} \right)
   =   |\xi+\eta|^\alpha - |\xi-\eta|^\alpha =K^\psi(\xi,\eta).
\]
Therefore,
we are led to a probabilistic interpretation of the inequality \eqref{e2} through $B^{\frac 12, \alpha}$.

\begin{remark}
In higher dimensions bi-fractional Brownian motion does not show up in the context of our inequalities 
(nor do we rely on bBm with $H\neq\tfrac 12$); therefore it becomes natural to search for the extensions 
of bBm based upon general negative definite functions. This will be done elsewhere.
\end{remark}

\section{A counterexample}\label{s:counter}

The inequality \eqref{e2} trivially extends to the case $\alpha=\infty$
in the following sense. Let
\begin{align*}
  M &= \sup\{r: \P(X<r)<1 \} = \mathop{\mathrm{ess\,sup}} X;  \\
  m &=  \sup\{r: \P(X<r)=0 \} = \mathop{\mathrm{ess\,inf}} X.
\end{align*}
Then
\[
   \|X-Y\|_\infty
   = M-m \le 2 \max\{|M|, |m| \}
   = \|X+Y\|_\infty.
\]

Without further assumptions the inequality \eqref{e2} will, in general,
not hold, for $2<\alpha<\infty$. To see this, fix $\alpha\in(2,\infty)$
and $c>0$. For any $M\ge c$ set $q:=c/M$ and $p:=1-q$. Let $X_M,Y_M$ be
i.i.d.\ random variables such that
\begin{gather*}
  \P(X_M=1) = \P(Y_M=1)=p; \\
  \P(X_M=-M) =\P(Y_M=-M)=q.
\end{gather*}
If $M\ge 1$, then
\begin{align*}
\E |X_M-& Y_M|^\alpha - \E |X_M+Y_M|^\alpha
\\
    &= 2pq\left[ (M+1)^\alpha-(M-1)^\alpha \right] -2^\alpha M^\alpha
    q^2 -2^\alpha p^2
\\
    &\ge  4pq \alpha M^{\alpha-1} -2^\alpha M^\alpha q^2-2^\alpha p^2
\\
    &=  M^{\alpha-2}(4p\alpha c -2^\alpha c^2) -2^\alpha p^2.
\end{align*}
Hence, whenever $c<2^{2-\alpha} \alpha$ and $M$ is large enough,
\[
  \E |X_M-Y_M|^\alpha - \E |X_M+Y_M|^\alpha  >0,
\]
and \eqref{e2} fails.

\begin{remark}
   Further counterexamples are presented in \cite{BLRS1994}.
\end{remark}

\begin{ack}
The authors are grateful to H.~Kempka, A.~Koldobskii, W.~Linde and A.~Nekvinda
for pointing out the problem and for valuable discussions. Special thanks are
due to an anonymous referee of an earlier version of this note for pointing out the
crucial reference \cite{BLRS1994}.

The research of Russian authors was supported by the RFBR-DFG grant 09-01-91331,
RFBR grants 10-01-00154$a$, 10-01-00397$a$, and the Federal Focused Programme
2010-1.1-111-128-033.
\end{ack}

\end{document}